\newtheorem{theorem}{Theorem}[section]
\newtheorem{lemma}[theorem]{Lemma}
\theoremstyle{definition}
\numberwithin{equation}{section}
\author{Redmond, Timothy 
  \and
  Ryavec, Charles
  }
\keywords{Lambda, modular, Russell type equation}
\subjclass{Primary 99X99, Secondary 99Y99}
\begin{document}
\bibliographystyle{plain}

\title[Exact Russell-Type Modular Equations]{Exact Russell-Type Modular Equations}
\begin{abstract}
This paper provides some statistics for the coefficients of Russell-Type modular equations for the modular function, $\lambda(\tau)$. The results hold uniformly for all odd primes. They do not rely on any numerical evaluations of coefficients of $q$ expansions of $\lambda$. The method relies on an internal structure of the coefficients of $\lambda$ expressed in terms of multiplicative functions defined on integer partitions. The method may be extended to other types of modular equations.
\end{abstract}
\maketitle

\section {EXACT RUSSELL-TYPE  MODULAR EQUATIONS}

The word, exact, is used in the title in two ways. One, to distinguish this paper from others which look at the modular function,
\begin{equation*}
\lambda(\tau) = \frac{\theta^4_2(\tau)}{\theta^4_3(\tau)},
\end{equation*}
via Russell-Type modular equations. Two, to consider the coefficients in the equations expressed explicitly in terms of multiplicative functions defined on integer partitions. 

Following~\cite{Russell1887}, (though not his notation in all instances) for an odd prime, p, put
\begin{equation*}
\frac{p+1}{8} = \frac{m}{n}, \qquad (m,n) = 1
\end{equation*}
where $n$ is $1$, $2$, or $4$, and let 
\begin{align*}
 X &= \left(\lambda(\tau)\lambda\left(p\tau\right)\right)^{\frac{n}{8}} \\
Y  &= \left((1-\lambda(\tau))\left(1-\lambda\left(p\tau\right)\right)\right)^{\frac{n}{8}}.
\end{align*}
Let vectors be given as,
\begin{align*}
U &= (1, X, X^2, \cdots, X^m) \\
V &= (1, Y, Y^2, \cdots, Y^m).
\end{align*}
Then there is an $m+1 \times m+1$ matrix of integers, $A_p$, for which the triangular form,
\begin{equation*}
U A_p V' = \sum_{i,h = 0}^m a_{i,j}X^i Y^h \qquad a_{i,h} = 0 \qquad i + h > m,
\end{equation*}
vanishes on the upper half $\tau$ plane~\cite{Russell1887,ChanLiaw2000}.
Russell argues several symmetries for the coefficients, and further,
that if the constant in the form is chosen to be, $a_{0,0}= 1$, then
the $0$-th row is,
\begin{equation*}
a_{0,h} = (-1)^{h} \binom{m}{h} \qquad 0 \le h \le m.
\end{equation*}
These observations are preliminary ~\cite{Russell1887} to a gathering of as
few coefficients in the $q$ expansions of $X$ and $Y$ as
are needed to minimize the labor involved in displaying a
matrix, $A_p$.

The coefficients of the $q$ expansion of $\lambda$ are known in
various forms. One form was given by~\cite{Simons1952}. Another arises from an
expression of $\lambda/16q$ as a product-quotient. For any $i$ and
$h$,
\begin{equation*}
X^i Y^h/(2^{ni}q^{mi})
\end{equation*}
is a product-quotient, and exactly the same derivation gives the
coefficients in its $q$ expansion. The steps involve a few standard
series manipulations. The result is given in
Lemma~\ref{XYqExpansionLemma}, where an internal structure of the
coefficients of $X^i Y^h$ are displayed in terms of a
multiplicative function, $\alpha_p(k)$, whose values serve as a
measure on integer partitions. The term, exact, refers to the explicit
presentation of this internal structure.

The coefficients of $\lambda$ are partition expressions. However the
contents of the $\lambda$ coefficients are stored and utilized in
terms of a modular algorithm, the familiar complexity of partition calculations
would imply that a new expression for the $\lambda$ coefficients would
not provide a substantially new efficiency in any algorithm producing
modular equations.

When things are set up explicitly in a partition framework, however, a
highly fortuitous orthogonality occurs, which wipes out the complexity
in certain situations and allows some extremely compact formulations
for the coefficients of $A_p$ for all primes, $p$. Several general
statements of the type are given in Theorem~\ref{RowSumTheoremOne}.

Theorem~\ref{FirstRowEntriesTheorem} goes in the opposite direction. While the entries of the $0$-th row of the $A_p$ are simple functions of $p$, as are the first several moments of the first row, the entries of the first row are complicated beyond any apparent rules for simplification. This is suggested in their prime factorizations.The second element, $a_{1,1}$, in the first row, for example, is already a fair example of the irregularity encountered throughout the remarkable collection of finite matrices, $[A_p]$, where the term, irregularity, is simply a term for the mystery attached to a collection indexed on the primes. Theorem 5.2 illustrates the general form of the entries that appear for all $p$. They all depend on the same functions, which are exponential expressions in $n$ and $m$ and polynomial expressions of $\alpha_p(k)$ and polynomials in the variable, $m$, related to Stirling numbers of the second kind. These expressions display the internal structure of the entries of the $A_p$ without reducing them to arithmetic values. They consist of a main term plus terms of diminishing magnitude and explain how a sudden rise in magnitude from the $0$-th column to the $1$-th column is possible.

The form provided by $A_p$ is the descriptive, $kl-k'l'$ form of the
modular equation of $\lambda$. It is only one such form. A source of
calculations for quite a few primes is~\cite{Russell1887}. An
historical table of progress of calculations in various forms is
in~\cite{Hanna1928}.  A relatively recent introduction to the subject,
with theory, exercises, examples and references and fascinating
connections to other subjects, is~\cite{1987-borwein}. The connections
to Ramanujan, as well as to many other researchers is in an extensive
list of publications, of which we cite~\cite{Berndt2007}. One of the remarkable labor
saving devices in Russell's paper is the use of the modular equation
of $\lambda$ for $p=2$ to set up the equation for $p=13$, which led to
a method he used for a further set of primes, with equations
formulated in aesthetic combinations. ~\cite{Russell1887, Russell1890}

We do not prove that the Russell type modular equations for $\lambda$ exist, but note (the standard technique) that by showing the existence of coefficients, $a_{i,h}$, for which the linear combination, $\sum a_{i,h}X^iY^h$, is a bounded weight zero modular function, the equation is established. The only places
in the fundamental domain where the functions $X^iY^h$ are not bounded are near rational points, of the type, an odd integer over an odd integer. Near such rational points, $X$ and $Y$ may be expanded in power series similar to the ones in
this paper. For example, when $\tau$ is near $1$,
\begin{align*}
  X(\tau) &= \dfrac{1}{X(\tau/(1-\tau))} \\
  Y(\tau) &= -\dfrac{Y(\tau/(1-\tau))}{X(\tau/(1-\tau))}
\end{align*}
may be expressed as power series in
\begin{equation*}
  q_0=e^{i\pi \tau/(1-\tau)}.
\end{equation*}
These power series may be calculated in the same manner that the power series of this paper were
obtained. 
Any combination of the $X^iY^h$ that removes the negative powers in
$q_0$ for all such odd over odd rationals will necessarily
be a constant function.

There is an additional observation. The equations that cause the
negative powers in $q_0$ to disappear at one point, say at $\tau = 1$,
are equivalent to the equations that cause the negative powers
obtained at other ratios of odd integers to disappear. Therefore it is
sufficient to consider the negative $q_0$ powers for $\tau$
near~1. The two symmetries of the modular relations observed by
Russell are the equations that eliminate the infinities at the cusps,
$\tau =1$ and $\tau = \frac{1}{p}$.

Some examples of $A_p$ are nice to have to make various statements concrete. These are provided in an appendix with a program to check them with. Also in an appendix is outlined the derivation of the differential equation of $\lambda(\tau)$:
\begin{equation*}
\frac{4}{27}\frac{1 - \lambda + \lambda^2}{\lambda^2(1 - \lambda)^2}
     =  \left(\frac{2}{3}\right)^2\left( \frac{\ddot \lambda}{{\dot \lambda}^2}\right)^2
                - \left(\frac{2}{3}\right)^3\frac {\dddot \lambda}{{\dot \lambda}^3}.
\end{equation*}

\section {Some Partition Notation}

For $N\ge 1$,  and for 
$$
J = [j_1, j_2, \cdots],
$$
a finite collection of positive integers, let 

\begin{align*}
w(J) &= j_1 + 2 j_2 + 3 j_3 + \cdots \\
J[N] &= [J: w(J) = N] \\
|J| &= \sum j_k \\
J_o &= \sum_{k\  \mbox{odd}} j_k \\
J_e &= \sum_{k\  \mbox{even}} j_k.
\end{align*}

For $p$ an odd prime, let $W_p(J)$ be a function defined on the
partitions, $J \in J[N]$ (for some $N$) by  
\begin{equation*}
W_p(J) =  \prod_{k=1}^{\infty}  \frac{\alpha_p(k)^{j_k}}{j_k!} \\
\end{equation*}
where
\begin{equation*}
\alpha_p(k) = \frac{\sigma_1(k)}{k} - 3\frac{\sigma_1(\frac{k}{2})}{\frac{k}{2}} +2\frac{\sigma_1(\frac{k}{4})}{\frac{k}{4}}+ \frac{\sigma_1(\frac{k}{p})}{\frac{k}{p}} - 3\frac{\sigma_1(\frac{k}{2p})}{\frac{k}{2p}} +2\frac{\sigma_1(\frac{k}{4p})}{\frac{k}{4p}}.
\end{equation*}
This function looks a bit involved but 
\begin{equation*}
\alpha_p(k) =  \frac{\sigma_1(k)}{k} \qquad (k,2p) = 1,
\end{equation*}
and multiplicative in $k$, with simple expressions for $\alpha_p(2^a)$ and $\alpha_p(p^a)$.

The functions, $J_o$, $J_e$, $W_p(J)$, have been introduced to define
polynomials in two variables, 
\begin{align*}
b_l(u,v) &= \sum_{J[l]} (-n)^{|J|}u^{J_{o}} v^{J_{e}} W_p(J) \\
b_0(u,v) &= 1.
\end{align*} 

With,
\begin{align*}
 X &= \left(\lambda(\tau)\lambda\left(p{\tau}\right)\right)^{\frac{n}{8}} \\
Y &= \left((1-\lambda(\tau))\left(1-\lambda\left(p{\tau}\right)\right)\right)^{\frac{n}{8}},
\end{align*}
we have,
\begin{lemma}\label{XYqExpansionLemma}
\begin{equation*}
 \qquad X^i Y^h = 2^{ni}q^{mi} \sum_{l=0}^{\infty} b_l(i + 2h, i)) q^l
\end{equation*}
where the $b_l$ are the polynomials in two variables defined above.
\end{lemma}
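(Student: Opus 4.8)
The plan is to reduce $X^iY^h$ to the classical $q$-product for $\lambda$, pass to $\log$, and collapse everything onto one Lambert-type series so that the coefficients $c_k$ can be read off and fed into the exponential formula.

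\textbf{Products and prefactor.} First I would record the standard expansions, with nome $q=e^{i\pi\tau}$,
\[
\lambda(\tau)=16q\prod_{k\ge1}\Big(\frac{1+q^{2k}}{1+q^{2k-1}}\Big)^{8},\qquad
1-\lambda(\tau)=\prod_{k\ge1}\frac{(1-q^{2k-1})^{8}}{(1+q^{2k-1})^{8}},
\]
together with the same identities for the argument $p\tau$ (replace $q$ by $q^{p}$). Substituting into the definitions of $X$ and $Y$, raising to the powers $i$ and $h$, and using $256^{n/8}=2^{n}$ and $n(p+1)/8=m$ pulls out the factor $2^{ni}q^{mi}$ exactly. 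Setting $P:=X^iY^h/(2^{ni}q^{mi})$, what is left is a product of factors $(1\pm q^{a})^{\pm n(\cdot)}$ with $P=1+O(q)$.

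\textbf{Logarithm into a single series.} Next I would take $\log P$ and, using $1+x=(1-x^{2})/(1-x)$ to turn the $(1+\cdots)$ factors into $(1-\cdots)$ ones and separating odd from even indices, collapse every term onto $L(q^{a}):=\sum_{k\ge1}\log(1-q^{ak})$. A short bookkeeping gives
\[
\log P= ni\big(\mathcal{L}_1-3\mathcal{L}_2+2\mathcal{L}_4\big)+ nh\big(2\mathcal{L}_1-3\mathcal{L}_2+\mathcal{L}_4\big),
\]
where $\mathcal{L}_j:=L(q^{j})+L(q^{jp})$. The only arithmetic input is the elementary evaluation $[q^N]L(q^{a})=-\sigma_1(N/a)/(N/a)$ (zero unless $a\mid N$), which is precisely the building block of $\alpha_p$; comparing with the definition of $\alpha_p$ shows at once that $[q^N]\big(\mathcal{L}_1-3\mathcal{L}_2+2\mathcal{L}_4\big)=-\alpha_p(N)$ for every $N$.

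\textbf{The parity collapse.} The crux is the second combination: I claim $[q^N]\big(2\mathcal{L}_1-3\mathcal{L}_2+\mathcal{L}_4\big)=0$ for every even $N$, while for odd $N$ the $\mathcal{L}_2,\mathcal{L}_4$ terms vanish and it equals $-2\alpha_p(N)$. The vanishing reduces to the local identities
\begin{gather*}
2\frac{\sigma_1(N)}{N}-3\frac{\sigma_1(N/2)}{N/2}+\frac{\sigma_1(N/4)}{N/4}=0\quad(4\mid N),\\
2\frac{\sigma_1(N)}{N}=3\frac{\sigma_1(N/2)}{N/2}\quad(N\equiv 2 \bmod 4),
\end{gather*}
together with the identical statements for the $p$-shifted triple $\sigma_1(N/p),\sigma_1(N/2p),\sigma_1(N/4p)$. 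Writing $N=2^aN'$ with $N'$ odd and using multiplicativity with $\sigma_1(2^a)=2^{a+1}-1$ turns each into a one-line identity in $2^{-a}$. I expect this to be the main obstacle, not for its difficulty but because it is the non-obvious cancellation that makes the $h$-dependence of the even-index coefficients disappear --- exactly the \emph{fortuitous orthogonality} advertised in the introduction. Combining the two combinations gives $c_k:=[q^k]\log P=-n(i+2h)\alpha_p(k)$ for $k$ odd and $c_k=-ni\,\alpha_p(k)$ for $k$ even.

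\textbf{Exponentiation.} Finally I would apply the exponential formula
\[
[q^l]\exp\Big(\sum_{k\ge1}c_kq^{k}\Big)=\sum_{J[l]}\prod_{k\ge1}\frac{c_k^{\,j_k}}{j_k!}.
\]
Substituting the two cases of $c_k$ factors the odd parts and even parts apart, producing $(-n)^{|J|}(i+2h)^{J_o}i^{J_e}$ times $\prod_k \alpha_p(k)^{j_k}/j_k!=W_p(J)$; summing over $J[l]$ yields $[q^l]P=b_l(i+2h,i)$. Multiplying back by the prefactor $2^{ni}q^{mi}$ gives the claimed expansion.
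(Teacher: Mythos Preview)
Your proof is correct and follows essentially the same route as the paper: pass to the $\eta$-quotient form of $\lambda$ and $1-\lambda$ (your conversion $1+x=(1-x^2)/(1-x)$ rebuilds exactly the paper's $Q$-quotients), take $\log$, read off the Lambert coefficients as $i\alpha_p(k)+h\beta_p(k)$, verify the parity collapse $\beta_p(k)=2\alpha_p(k)$ for $k$ odd and $\beta_p(k)=0$ for $k$ even, and exponentiate into the partition sum. One small remark: the ``fortuitous orthogonality'' the introduction refers to is not this parity collapse but the binomial identity $\sum_{h=0}^m(-1)^{h-1}\binom{m}{h}h^N=0$ for $N<m$, used later in Section~4 to annihilate most partition terms when computing $-A^{1,0}a_0$; your parity observation is what the paper records simply as the identity $\beta_p(k)=0$ for $k$ even.
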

A short initial list of some of these polynomials is given in appendix A. The last two polynomials in the appendix with asterisks are those with the second variable equal to zero, which are therefore sums over partitions with odd indices only. These are used in certain matrices in future calculations.

\begin{proof}
With
\begin{equation*}
Q = \prod_{n=1}^{\infty} \Big( 1 - q^n\Big),
\end{equation*}
we first recall the steps that take, 
\begin{equation*}
\lambda(\tau)^{\frac{n}{8}} = (\sqrt{2})^n q^{\frac{n}{8}}\frac{Q^n(\tau) Q^{2n}(4 \tau)}{Q^{3n}(2\tau)}, 
\end{equation*}
to  
\begin{equation*}
(\sqrt{2})^n q^{\frac{n}{8}}\Big( 1 + \sum_{|J|} (-n)^{|J|} \ W(J) \ q^{w(J)} \Big),
\end{equation*}
where 
\begin{align*}
W(J) &= \prod_{k=1}^{\infty} \frac{\alpha(k)^{j_k}}{j_k !} \\
\alpha(k) &= \frac{\sigma_1(k)}{k} + 2\frac{\sigma_1(k/4)}{k/4} - 3\frac{\sigma_1(k/2)}{k/2}.
\end{align*}
Thus,
\begin{align*}
 \frac{Q^n(\tau) Q^{2n}(4 \tau)}{Q^{3n}(2\tau)} &= \exp\Big[n \Big( \log[ Q(\tau)] + 2 \log[ Q(4\tau)] - 3\log [Q(2\tau)] \Big)\Big]  \\
&= \exp\left[ n\left(
      \sum_{M=1}^{\infty}(\log(1- q^M)  + 2 \log(1- q^{4M})
             -3\log(1- q^{2M}))
      \right)\right] \\
&= \exp\left[-n \sum_{N=1}^{\infty} \left(\left(\frac{\sigma_1(N)}{N} + 2\frac{\sigma_1(N/4)}{N/4} - 3\frac{\sigma_1(N/2)}{N/2}\right) q^N \right)\right]\\
&= \exp\Big[-n \sum_{N=1}^{\infty} \alpha(N) q^N\Big] \\
&= \prod_{N=1}^{\infty} e^{-n \ \alpha(N) q^N } \\
&= \prod_{N=1}^{\infty} \Big(1 + \sum_{j=1}^{\infty}\frac{(-n)^j \alpha^j(N)q^{N j}}{j!}\Big) \\
&= 1 +  \sum_{J} (-n)^{|J|} W(J) q^{w(J)} \\
&= 1 +  \sum_{l=1}^{\infty}\sum_{J[l]} (-n)^{|J|} W(J) q^{l}.
\end{align*}
For $X$ and $Y$ as above, and with,
\begin{align*}
\lambda(\tau) &=  16 q \frac{Q^8(\tau) Q^{16}(4 \tau)}{Q^{24}(2\tau)} \\
1 - \lambda(\tau) &= \frac{Q^{16}(\tau) Q^{8}(4 \tau)}{Q^{24}(2\tau)},
\end{align*}
we have, following the same steps, that 
\begin{equation*}
X^i Y^h = 2^{ni}q^{mi}\Big(1 + \sum_{l=1}^{\infty}\sum_{J[l]} (-n)^{|J|} U_{p,i,h}(J)q^l\Big),
\end{equation*}
where
\begin{align*}
U_{p,i,h}(J) &= \frac{\gamma_{p,i,h}(1)^{j_1}}{j_1!} \ \frac{\gamma_{p,i,h}(2)^{j_2}}{j_2!}\ \frac{\gamma_{p,i,h}(3)^{j_3}}{j_3!}\cdots,\\
\gamma_{p,i,h}(k) &= i \alpha_p(k) + h \beta_p(k), 
\end{align*}
and where 
\begin{align*}
\alpha_p(k) &= \frac{\sigma_1(k)}{k} - 3\frac{\sigma_1(\frac{k}{2})}{\frac{k}{2}} +2\frac{\sigma_1(\frac{k}{4})}{\frac{k}{4}}+ \frac{\sigma_1(\frac{k}{p})}{\frac{k}{p}} - 3\frac{\sigma_1(\frac{k}{2p})}{\frac{k}{2p}} +2\frac{\sigma_1(\frac{k}{4p})}{\frac{k}{4p}} \\
\beta_p(k) &= 2\frac{\sigma_1(k)}{k} - 3\frac{\sigma_1(\frac{k}{2})}{\frac{k}{2}} + \frac{\sigma_1(\frac{k}{4})}{\frac{k}{4}}+ 2\frac{\sigma_1(\frac{k}{p})}{\frac{k}{p}} - 3\frac{\sigma_1(\frac{k}{2p})}{\frac{k}{2p}} + \frac{\sigma_1(\frac{k}{4p})}{\frac{k}{4p}}.
\end{align*}
It is easy to check that when $p$ is odd, 
\begin{align*}
\beta_p(k) &= 2\alpha_p(k) \qquad  k \ \  \mbox{odd} \\
                  &= 0                  \qquad  \qquad \ k   \ \ \mbox{even}.
\end{align*}
Therefore, with $\epsilon_k =1$ if k odd, and $\epsilon_k =0$ if k even,  we can write, 
\begin{equation*}
U_{p,i,h} =   \frac{(i + 2h\epsilon_1)^{j_1}\alpha_p(1)^{j_1}}{j_1!}\ \frac{(i + 2h\epsilon_2)^{j_2}\alpha_p(2)^{j_2}}{j_2!}\ \frac{(i + 2h\epsilon_3)^{j_3}\alpha_p(3)^{j_3}}{j_3!}\cdots.
\end{equation*}
Recall that,
\begin{align*}
b_l(i+2h,i) &= \sum_{J[l]} (-n)^{|J|}  \ (i + 2h)^{J_o} \ i^{J_e} W_p(J) \\
b_0 &= 1.
\end{align*}
Thus the polynomials, $b_l$, may be used to give the $q$ expansion of $X^iY^h$, which is the lemma. 

\section {The Equations}

Substitute the expressions in Lemma~\ref{XYqExpansionLemma} into $U A_p V'$ and set the
coefficients of the powers of $q$ to zero. There results,
\begin{equation}\label{MainEquation}
0 = \sum_{i=0}^m2^{ni} \sum^{\infty}_{l=0}  \Big( \sum_{h=0}^{m-i} a_{ih}b_l(i+2h,i)\Big) q^{mi + l},
\end{equation}
the equations that determine the entries in $A_p$. 

The rows, $i=0, i=1, \cdots, i=m$ are evaluated in steps.  
Choosing $a_{0,0} = 1$,  a simple argument (Russell) gives,
\begin{equation*}
a_{0,h} = (-1)^{h} \binom{m}{h}, \qquad 0\le h\le m,
\end{equation*}
for row $0$ without solving any equations, though the methods of this
paper give the same result.  Thus, the equations for $a_{0,h}$
for $1\le h\le m$ (assuming $a_{0,0}=1$) may be expressed as
\begin{equation*}
  \left(\begin{array}{c} -1 \\ 0 \\ \vdots \\ 0\end{array}\right)
    =\left(\begin{array}{cccc}
      b_0(2,0) & b_0(4,0) & \ldots &b_0(2(m-1),0) \\
      b_1(2,0) & b_1(4,0) & \ldots &b_1(2(m-1),0) \\
               & \ddots \\
      b_{m-1}(2,0) & b_{m-1}(4,0) & \ldots &b_{m-1}(2(m-1),0) \\
      \end{array}\right)
    \left(\begin{array}{c}
      a_{0,1} \\ a_{0,2} \\ \vdots \\ a_{0,m-1}
    \end{array}\right).
\end{equation*}
The proof of lemma~\ref{Nonsingular11Determinant} on page~\pageref{Nonsingular11Determinant} may be used to prove
that the matrix
\begin{equation*}
  \left(\begin{array}{cccc}
      b_0(2,0) & b_0(4,0) & \ldots &b_0(2(m-1),0) \\
      b_1(2,0) & b_1(4,0) & \ldots &b_1(2(m-1),0) \\
               & \ddots \\
      b_{m-1}(2,0) & b_{m-1}(4,0) & \ldots &b_{m-1}(2(m-1),0) \\
  \end{array}\right)
\end{equation*}
is non-singular with determinant
\begin{equation*}
  (-2n)^{m(m-1)/2}.
\end{equation*}
This informs us that there is exactly one solution of this system of
linear equations.
Finally equation~\ref{BinomialIdentityPartA} on
page~\pageref{BinomialIdentityPartA} shows that 
\begin{equation*}
a_{0,h} = (-1)^{h} \binom{m}{h}, \qquad 0\le h\le m,
\end{equation*}
is a solution of these linear equations thus providing Russell's result.

In the $i=1$ row there are $m+1$ numbers. The last is $a_{1,m}=
0$. Besides the symmetry, $a_{i,h} = a_{h,i} $, a horizontal symmetry,
$a_{i,h} = (-1)^{m(i-1)}a_{i,m-i-h}, 0\le h \le m-i$, relates half of
the rest in pairs. So for $i = 1$ there are $\frac{m}{2}$ unknowns
among the, $a_{1,h}, 0 \le h\le m$, to find, but we do not
use the symmetry but rather solve for the $m$ values,
$a_{1,h}, 0\le h\le m-1$, via $m$ equations. The $m$ equations that
determine these values come from the equations above:
\begin{equation*}
0 = \sum_{i=0}^m2^{ni} \sum^{\infty}_{l=0}  \Big( \sum_{h=0}^{m-i} a_{ih}b_l(i+2h,i)\Big) q^{mi + l}.
\end{equation*}
The equations that come from the coefficients of $q^m, \cdots ,q^{2m-1}$ are,
\begin{align*}
0 &= 2^n\sum_{h=0}^{m-1} a_{1,h}b_0(1+2h,1) +  \sum_{h=0}^m a_{0,h}b_m(0+2h,0)  & (q^m) \\
&\cdots \\
&= 2^n\sum_{h=0}^{m-1} a_{1,h}b_{m-1}(1+2h,1) +  \sum_{h=0}^m a_{0,h}b_{2m-1}(0+2h,0) & (q^{2m-1}) 
\end{align*}
or, in matrix form,
\begin{equation*}
0 = 2^n A^{1,1} a'_1 + A^{1,0} a_0,
\end{equation*}
where
\begin{align*}
a_1 &= (a_{1,0}, a_{1,1}, \cdots, a_{1,m-1}, 0) \\
a'_1 &= (a_{1,0}, a_{1,1}, \cdots, a_{1,m-1})
\end{align*}
\begin{equation*}
A^{1,1} = [b_l(1+2h), 1)]_{l,h=0}^{m-1}        \qquad  (m\times m)
\end{equation*}
\begin{align*}
A^{1,0} &= [b_{l+m}(0+2h), 0)]_{l,h=0}^{m-1,m} \\
       &= [b_{l+m}(2h), 0)]_{l,h=0}^{m-1,m}  \qquad  (m \times m+1).
\end{align*}

\begin{lemma}\label{Nonsingular11Determinant}
The $m\times m$ matrix, $A^{1,1}$, is nonsingular with   $|A^{1,1}| = (-2n)^{\frac{m(m-1)}{2}}$.
\end{lemma}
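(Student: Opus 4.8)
The plan is to read each entry of $A^{1,1}$ as a value of one single-variable polynomial. Fixing the row index $l$ and treating the column index $h$ as the variable, set $P_l(h) := b_l(1+2h,1)$. By the definition of $b_l$,
\begin{equation*}
P_l(h) = \sum_{J[l]} (-n)^{|J|}\,(1+2h)^{J_o}\,W_p(J),
\end{equation*}
so that $P_l$ is a polynomial in $h$, each summand having degree $J_o$. The whole computation rests on pinning down the exact degree and leading coefficient of $P_l$.

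First I would bound the degree. Since $w(J)=\sum_k k\,j_k=l$ gives $J_o\le |J|\le w(J)=l$, every summand has degree at most $l$, whence $\deg P_l\le l$. The equality $J_o=l$ forces $|J|=l$ and $J_e=0$ at once, hence $j_k=0$ for all $k\ge 2$, so the unique partition attaining it is $j_1=l$. For that partition $|J|=l$ and, using $\alpha_p(1)=\sigma_1(1)=1$, $W_p(J)=\alpha_p(1)^l/l!=1/l!$; its contribution $(-n)^l(1+2h)^l/l!$ therefore supplies the entire top-degree term. Hence $P_l$ has degree exactly $l$ with leading coefficient
\begin{equation*}
c_l=\frac{(-n)^l\,2^l}{l!}=\frac{(-2n)^l}{l!}\neq 0 .
\end{equation*}

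With this the determinant factors cleanly. Writing $P_l(h)=\sum_{j=0}^{m-1}c_{l,j}h^j$, where $c_{l,j}=0$ for $j>l$ and $c_{l,l}=c_l$, gives $A^{1,1}=C\,V$ with $C=[c_{l,j}]_{l,j=0}^{m-1}$ lower triangular (diagonal $c_0,\dots,c_{m-1}$) and $V=[h^{\,j}]_{j,h=0}^{m-1}$ the Vandermonde matrix on the nodes $0,1,\dots,m-1$, for which $\det V=\prod_{0\le a<b\le m-1}(b-a)=\prod_{l=0}^{m-1}l!$. Therefore
\begin{equation*}
\det A^{1,1}=\det C\cdot\det V=\Big(\prod_{l=0}^{m-1}\frac{(-2n)^l}{l!}\Big)\Big(\prod_{l=0}^{m-1}l!\Big)=\prod_{l=0}^{m-1}(-2n)^l=(-2n)^{m(m-1)/2},
\end{equation*}
the factorials cancelling. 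Since $n\in\{1,2,4\}$ this is nonzero, so $A^{1,1}$ is nonsingular.

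The only step with genuine content, and the one I would check most carefully, is the leading-coefficient computation: that $\deg_h P_l=l$ and that the all-ones partition is the sole contributor to the $h^l$ term --- precisely where $\alpha_p(1)=1\neq 0$ enters. Everything afterward, the triangular--Vandermonde factorization and the cancellation of factorials, is routine bookkeeping. The same degree count, now with second argument $0$ so that only partitions with odd parts survive, should handle the companion $b_l(2k,0)$ determinant invoked in the row-$0$ discussion, whose leading coefficient in $k$ is again $(-2n)^l/l!$.
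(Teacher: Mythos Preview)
Your proof is correct and follows essentially the same approach as the paper: both identify that the partition $j_1=l$ supplies the leading term $(-n u)^l/l!$ of $b_l(u,v)$, then reduce the determinant to a Vandermonde (the paper via row operations, you via the explicit factorization $A^{1,1}=CV$) with the factorials cancelling. Your version is simply a more detailed write-up of the paper's terse three-sentence argument.
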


\begin{proof}
The partition, $J\in J[l]$,  with the largest norm, $|J|$, is $j_1 = l$, so that the polynomial, $b_l$ is
\begin{equation*}
b_l(u,v) = \frac{(-nu)^l}{l!} + \mbox {lower powers of u}
\end{equation*}
Row operations reduce the determinant to a vandermonde. The factorials cancel. The power of $-2n$ that results is $1+2+\cdots + m-1$.

Next, for the $i=2$ row the equations that come from the
coefficients of $q^{2m}$, $q^{2m+1}$,~$\ldots$ $q^{3m-2}$ are,
\begin{align*}
  0 &= \begin{aligned}[t]
    2^{2n}\sum_{h=0}^{m-2} a_{2,h}b_0(2+2h,2) &+ 2^n \sum_{h=0}^{m-2} a_{1,h}b_m(1+2h,1) \\
                                          & + \sum_{h=0}^{m-2} a_{0,h}b_{2m}(0+2h,0)
    \end{aligned}&(q^{2m}) \\
&\cdots& \\
&= \begin{aligned}[t]
    2^{2n}\sum_{h=0}^{m-2} a_{2,h}b_{m-2}(2+2h,2) &+ 2^n \sum_{h=0}^{m-2} a_{1,h}b_{2m-2}(1+2h,1) \\
    &+  \sum_{h=0}^{m-2} a_{0,h}b_{3m-2}(0+2h,0)
    \end{aligned} & (q^{3m-2}) 
\end{align*}
or, in matrix form,
\begin{equation*}
0 = 2^{2n} A^{2,2} a'_2 + 2^n A^{2,1} a'_1 + A^{2,0} a_0,
\end{equation*}
\begin{align*}
 A^{2,2}  &= \Big[b_l(2+2h, 2)\Big]_{l,h = 0}^{m-2} \qquad  \qquad (m-1 \times m-1) \\
 A^{2,1}  &= \Big[b_{l+m}(1+2h,1)\Big]_{l,h=0}^{m-2,m-1} \qquad   \qquad (m-1 \times  m) \\
 A^{2,0}  &= \Big[b_{l+2m}(2h,0)\Big]_{l,h=0}^{m-2,m} \qquad   \qquad \ (m-1 \times m+1). 
\end{align*}
The initial piece, $a'_2$, of the second row is given by the solution of 
\begin{equation*}
2^{2n}A^{2,2}a'_2 + 2^n A^{2,1}a'_1 + A^{2,0} a_0 = 0,
\end{equation*}
for the $m-1$ vector,
\begin{equation*}
a'_2 = \Big( a_{2,0}, a_{2,1}, \cdots, a_{2, m-2}\Big),
\end{equation*}
\end{proof}

\begin{lemma}
The $m-1 \times m-1$ matrix, $A^{2,2}$, is nonsingular with determinant,  $|A^{2,2}| = (-2n)^{\frac{(m-1)(m-2)}{2}}$
\end{lemma}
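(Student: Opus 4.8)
The plan is to mimic the proof of Lemma~\ref{Nonsingular11Determinant} almost verbatim, since $A^{2,2} = [b_l(2+2h,2)]_{l,h=0}^{m-2}$ has exactly the same triangular-in-degree structure as $A^{1,1}$, only with the evaluation nodes shifted and the size reduced by one. The crucial input is the leading-term identity
\begin{equation*}
b_l(u,v) = \frac{(-nu)^l}{l!} + (\text{lower powers of } u),
\end{equation*}
which I would first re-establish in the present context by checking that it is insensitive to the second argument. Among the partitions $J \in J[l]$, the power of $u$ contributed by $J$ is $J_o = \sum_{k\ \mathrm{odd}} j_k$, and since $w(J) = \sum_k k\, j_k \ge \sum_{k\ \mathrm{odd}} j_k = J_o$ with equality only when $j_1 = l$ and all other $j_k = 0$, the top power $u^l$ comes uniquely from the partition $j_1 = l$. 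That partition has $J_e = 0$, so its monomial carries no power of $v$, and $W_p(J) = \alpha_p(1)^l/l! = 1/l!$ because $\alpha_p(1) = \sigma_1(1)/1 = 1$. Hence the coefficient of $u^l$ in $b_l(u,v)$ equals $(-n)^l/l!$ for every $v$, in particular for $v=2$.

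With this in hand, I would view the entry in row $l$ (rows $l = 0, \ldots, m-2$) as the polynomial $b_l(\,\cdot\,,2)$ of degree exactly $l$ evaluated at the node $u_h = 2+2h$, for $h = 0,\ldots,m-2$. Because the polynomials $b_0(\cdot,2), b_1(\cdot,2), \ldots, b_{l-1}(\cdot,2)$ have degrees $0,1,\ldots,l-1$ with nonzero leading coefficients (here $n \in \{1,2,4\}$, so $-n \ne 0$), they span all polynomials of degree $\le l-1$. I would therefore subtract from row $l$ the unique linear combination of the earlier rows that cancels every term below degree $l$; this determinant-preserving row operation replaces the $(l,h)$ entry by $\frac{(-n)^l}{l!}\, u_h^{\,l}$. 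The determinant then factors as
\begin{equation*}
|A^{2,2}| = \left(\prod_{l=0}^{m-2} \frac{(-n)^l}{l!}\right)\det\!\left[u_h^{\,l}\right]_{l,h=0}^{m-2},
\end{equation*}
a scalar times a Vandermonde determinant on the distinct nodes $u_h = 2(h+1)$.

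Finally I would evaluate the two factors. The Vandermonde gives $\prod_{0\le h<h'\le m-2}(u_{h'}-u_h) = \prod_{0\le h<h'\le m-2} 2(h'-h) = 2^{\binom{m-1}{2}}\prod_{j=0}^{m-2} j!$, using the standard superfactorial identity $\prod_{0\le h<h'\le m-2}(h'-h) = \prod_{j=0}^{m-2} j!$. The scalar contributes $(-n)^{\binom{m-1}{2}}/\prod_{l=0}^{m-2} l!$. The two factorial products cancel, leaving $|A^{2,2}| = (-2n)^{\binom{m-1}{2}} = (-2n)^{(m-1)(m-2)/2}$, which is nonzero, so $A^{2,2}$ is nonsingular.

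I do not expect any serious obstacle, as the argument is a direct transcription of the $A^{1,1}$ case; the one point that genuinely needs verification --- and where a careless transcription could go wrong --- is the claim that the leading power of $u$ is unaffected by setting $v = 2$ rather than $v = 1$. That is exactly the step I isolated first: the maximizing partition $j_1 = l$ has $J_e = 0$, so the second variable contributes $2^0 = 1$ and the leading coefficient is genuinely independent of the second argument. Everything after that is the same bookkeeping of Vandermonde and superfactorial factors already used in Lemma~\ref{Nonsingular11Determinant}.
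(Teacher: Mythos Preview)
Your argument is correct and is exactly the approach the paper intends: its proof consists of the single line ``Same as Lemma~\ref{Nonsingular11Determinant},'' and you have faithfully spelled out that argument for $A^{2,2}$, including the key observation that the leading $u^l$ coefficient of $b_l(u,v)$ is $(-n)^l/l!$ independently of $v$, followed by the Vandermonde reduction.
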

\begin{proof}
Same as Lemma~\ref{Nonsingular11Determinant}.
\end{proof}

The general case is summarized as follows.  

  For each, $i$, $1 \le i \le m$, there are $i+1$ matrices, $A^{i,r}, 0\le r\le i$ that specify the $i$-th row of $A_p$, where, $a'_i$ (being the first $m+1-i$ elements of the $i$-th row, $a_i$, of $A_p$) is the solution of

\begin{equation*}
2^{n i} A^{i,i} a'_i + 2^{n(i-1)} A^{i,i-1} a'_{i-1} + \cdots + A^{i,0} a_0 =0.
\end{equation*}

 The vectors with lower indices have already been solved for. The matrix, $A^{i,r}$, has rows indexed on, $0\le l \le m-i$, and  columns indexed on, $0 \le h \le m-r$, defined via the polynomials, $b_N(u,v)$, as

\begin{equation*}
A^{i,r} = [b_{l + m(i-r)}(r+2h), r)] \qquad 0\le l \le m-i,\  0\le h \le m -r.
\end{equation*}

 The $m+i-1 \times m+i-1$ matrix, $A^{i,i}$, is nonsingular, with determinant,

\begin{equation*}
|A^{i,i}| = (-2n)^{\frac{(m+1-i)(m-i)}{2}}.
\end{equation*}

\section {Explicit Statements Row $i=1$}

From what has been said above, the coordinates of, $a'_1$, appear in the solution of,
\begin{equation*}
0 = 2^n A^{1,1} a'_1 + A^{1,0} a_0.
\end{equation*}
To make this explicit we first evaluate the components of the vector, $-A^{1,0}a_0$, the $l$-th component, $0\le l \le m-1$, being
\begin{equation*}
\Big(-A^{1,0}a_0\Big)_l = \sum_{h=0}^{m}b_{l+m}(2h,0) (-1)^{h-1}\binom{m}{h}.
\end{equation*}
This is where we get very lucky. The polynomial, 
\begin{equation*}
b_{l+m}(2h, 0) =  \sum_{J[l+m]} (-n)^{|J|}  (2h)^{J_{o}}0^{J_{e}} W_p(J),
\end{equation*}
is a sum of powers of $h$. Since
\begin{align}
\sum_{h=0}^{m} h^N (-1)^{h-1}\binom{m}{h} &= 0 &0 \le  N < m, \label{BinomialIdentityPartA} \\
&= (-1)^{m-1}P_{N-m} (m) N!                   &  m \le N, \nonumber
\end{align}
where $P_{N-m}(m)$ is a polynomial of degree $N-m$, we can express the $l$-th component,
\begin{equation*}
(-A^{1,0}a_0)_l,
\end{equation*}  
as
\begin{equation*}
  \sum_{d=m}^{l+m} (-1)^{d+m-1}n^d 2^d P_{d-m}(m) \sum_{|J|=d} \alpha_p(1)^{j_1}\cdots \alpha_p(l+m)^{j_{l+m}}\ \frac{d!}{j_1! j_2! \cdots j_{l+m}!},
\end{equation*}
where the partitions involved consist of $j_k$ with only $k$ odd.

Note that the outer sum starts at $d = m$ because the only partitions of $l+m$
that contribute non zero terms to the inner sum are those for which $|J|$ is no smaller than $m$, with the condition that the subscripts are odd always in force. (This explains the asterisks on certain polynomials in an appendix). This allows for some quite compact statements in the next section.

Note: The inner sum on partitions, $J$, for which $|J| = d$, is equal to the terms in the expansion of
\begin{equation*}
\Big(\alpha_p(1) + \alpha_p(2) + \cdots \alpha_p(l+m)\Big)^d
\end{equation*}
for which
\begin{equation*}
 j_1 + 2 j_2 + \cdots (l+m) j_{l+m} = l+m,
\end{equation*}
and for which $j_k$ = 0 for all even $k$. The polynomials may be calculated from the recursion,   
\begin{equation*}
d P_{d-m}(m) = m\Big( P_{d-m-1}(m) + P_{d-m}(m-1) \Big) \qquad P_0(m) = 1.
\end{equation*}
A table of the few polynomials used in this paper is in the appendix. A binomial expression satisfied by the $P_N$ is provided but not used in this paper.

\section {Theorems}

\begin{theorem}\label{RowSumTheoremOne}
For all odd primes, $p$,   
\begin{align*}
\sum^{m}_{h=0}a_{1,h}  &= -n^m 2^{m-n} \\
\sum^{m}_{h=0}(1 + 2h)a_{1,h}  &= -n^m 2^{m+1-n} P_1(m) \\
\sum^{m}_{h=0}(1+2h)^2 a_{1,h}  &=
\begin{aligned}[t]
     -n^m &2^{m+3-n} P_2(m) \\
&+  n^{m-1} 2^{m+1-n}  \alpha_p (2) \\
     &- n^{m-2} 2^{m+1-n}  m  \alpha_p (3).
\end{aligned}
\end{align*}
\end{theorem}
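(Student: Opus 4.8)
The plan is to reduce all three identities to the row-$1$ linear system already isolated in Section~4. Writing the row-$1$ equations in the form
\[
2^{n}\sum_{h=0}^{m}a_{1,h}\,b_l(1+2h,1)=\bigl(-A^{1,0}a_0\bigr)_l,\qquad 0\le l\le m-1,
\]
where $a_{1,m}=0$ lets the sum run to $h=m$, I regard the left side as a fixed linear functional applied to the polynomial $b_l(\,\cdot\,,1)$. Each quantity $\sum_{h}(1+2h)^s a_{1,h}$ is that same functional applied to the monomial $u^{s}$, so once $u^{s}$ is written in terms of the $b_l(\,\cdot\,,1)$ the moment becomes a known linear combination of the numbers $(-A^{1,0}a_0)_l$.

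First I would record that $b_l(u,1)$ has degree exactly $l$ in $u$: among $J\in J[l]$ the value $J_o$ is largest only for $j_1=l$, which supplies the leading term $(-nu)^l/l!$ (since $\alpha_p(1)=1$). Hence $\{b_0(\,\cdot\,,1),\dots,b_{m-1}(\,\cdot\,,1)\}$ is a basis of the polynomials of degree $<m$, and for $s\le 2$ (so $m\ge 3$; the smaller $m$ are checked by hand) I only need
\[
b_0=1,\qquad b_1(u,1)=-nu,\qquad b_2(u,1)=\tfrac{n^{2}}{2}u^{2}-n\,\alpha_p(2).
\]
Inverting this triangular system gives $u=-\tfrac1n b_1$ and $u^{2}=\tfrac{2}{n^{2}}b_2+\tfrac{2\alpha_p(2)}{n}b_0$. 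Putting $u=1+2h$, interchanging the two finite sums, and applying the displayed system turns the three moments into
\[
2^{-n}(-A^{1,0}a_0)_0,\qquad -\tfrac1n 2^{-n}(-A^{1,0}a_0)_1,\qquad 2^{-n}\Bigl(\tfrac{2}{n^{2}}(-A^{1,0}a_0)_2+\tfrac{2\alpha_p(2)}{n}(-A^{1,0}a_0)_0\Bigr),
\]
respectively.

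It then remains to evaluate $(-A^{1,0}a_0)_l$ for $l=0,1,2$ from the Section~4 expression
\[
(-A^{1,0}a_0)_l=\sum_{d=m}^{l+m}(-1)^{d+m-1}n^{d}2^{d}P_{d-m}(m)\sum_{\substack{|J|=d\\ w(J)=l+m}}\frac{d!}{\prod_k j_k!}\prod_k\alpha_p(k)^{j_k},
\]
the inner partitions having odd parts only. The key simplification is a parity count: for odd parts $w(J)-|J|=\sum_k(k-1)j_k$ is even, so only $d\equiv m+l\pmod2$ survive, and the admissible $J$ are very sparse. For $l=0$ the only partition is $j_1=m$; for $l=1$ it is $j_1=m+1$; for $l=2$ there are exactly two, namely $j_1=m+2$ and $(j_1,j_3)=(m-1,1)$. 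Because $\alpha_p(1)=1$ the multinomial weights collapse to $1$, to $1$, and to $1$ and $m$ respectively, giving
\[
(-A^{1,0}a_0)_0=-n^{m}2^{m},\qquad (-A^{1,0}a_0)_1=n^{m+1}2^{m+1}P_1(m),
\]
\[
(-A^{1,0}a_0)_2=-n^{m+2}2^{m+2}P_2(m)-n^{m}2^{m}\,m\,\alpha_p(3).
\]
Substituting these into the three combinations above yields the claimed formulas.

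The conceptual content is light; the main obstacle is bookkeeping. One must justify the degree claim for $b_l(\,\cdot\,,1)$ precisely, enumerate exactly the surviving odd-part partitions at each $d$, and track the sign $(-1)^{d+m-1}$ against the parity of $|J|$. The $s=2$ moment is the first case in which a non-leading partition, $(j_1,j_3)=(m-1,1)$, contributes, producing the $m\,\alpha_p(3)$ term, while the $\alpha_p(2)$ term enters only through the constant correction $\tfrac{2\alpha_p(2)}{n}b_0$ in the expansion of $u^{2}$; pinning down the sign of that correction, which traces back to the single factor $(-n)$ attached to the partition $j_2=1$ in $b_2$, is the delicate point. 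The identical scheme computes every higher moment $\sum_h(1+2h)^s a_{1,h}$ with $s\le m-1$, the only additional labor being the (still sparse, but lengthening) list of odd-part partitions of $m+s$.
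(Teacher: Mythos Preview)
Your argument is correct and follows essentially the same route as the paper's own proof: both read off the rows $l=0,1,2$ of the linear system $2^{n}A^{1,1}a'_1=-A^{1,0}a_0$ via the explicit polynomials $b_0,b_1,b_2$, and both evaluate $(-A^{1,0}a_0)_l$ by enumerating the handful of odd-part partitions of $m+l$ with $|J|\ge m$; your parity observation $w(J)\equiv|J|\pmod 2$ for odd-part $J$ is a slightly tidier way to discard $d=l+m-1,\,l+m-3,\dots$ than the paper's case-by-case treatment, but the content is identical. One bookkeeping remark: if you actually carry your $s=2$ substitution through you obtain the $\alpha_p(2)$ term with a \emph{minus} sign, $-\,n^{m-1}2^{m+1-n}\alpha_p(2)$, which is also what the paper's own computation and the numerical $A_p$ in the appendix give---the plus sign in the displayed statement appears to be a typo, not a flaw in your method.
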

\begin{proof} We first express  $\Big(-A^{1,0}a_0\Big)_l$ explicitly in terms of $d = l+m, d = l+m-1$, and so on. 

Thus, for $d = l+m$, we have $J = [j_1 = l+m]$, which contributes a term,
$$
(-1)^{l-1}n^{l+m}2^{l+m} P_l(m) \frac{(l+m)!}{(l+m)!}\alpha_p(1)^{l+m} = (-1)^{l-1}n^{l+m}2^{l+m}P_l(m),
$$
as $\alpha_p(1) = 1$. 

For $d = l+m-1$, the partition is $J = [j_1 = l+m-2, j_2 =1]$, and there is no contribution due to $j_2 = 1$.  

For $d = l+m-2$, the partition is $J = [j_1 = l+m-3, j_3 =1]$, and the contribution is
\begin{align*}
(-1)^{l-3}n^{l+m-2}2^{l+m-2}&P_{l-2}(m) \frac{(l+m-2)!}{(l+m-3)!}\alpha_p(1)^{l+m-3}\alpha_p(3) \\
            & = (-1)^{l-3}n^{l+m-2}2^{l+m-2}P_{l-2}(m)(l+m-2)\alpha_p(3).
\end{align*}

For $d = l+m-3$, the contribution is $0$ by a very similar argument as that for $d = l+m-1.$

For $d = l+m-4$, there are two partitions, $J = [j_1 = l+m-6, j_3 =2]$, and, $J = [j_1 = l+m-5, j_5 =1]$ , and the contribution, after simplification, is
\begin{align*}
(-1)^{l-5}\Big(n^{l+m-4}2^{l+m-4}&P_{l-4}(m)(l+m-4)(l+m-5)\frac{\alpha^2_p(3)}{2!} \\
                             & + n^{l+m-4}2^{l+m-4}P_{l-4}(m)(l+m-4)\alpha_p(5)\Big).
\end{align*}

Altogether we obtain,

\begin{align*}
\Big(-A^{1,0}a_0\Big)_l &= (-1)^{l-1} \Big( 2^{l+m} n^{l+m} P_l(m) \\
&+ 2^{l+m-2} n^{l+m-2} P_{l-2}(m)(l+m-2)\alpha_p(3) \\
&+ 2^{l+m-4} n^{l+m-4} P_{l-4}(m)(l+m-4)(l+m-5)\frac{\alpha^2_p(3)}{2!} \\
&+ 2^{l+m-4} n^{l+m-4} P_{l-4}(m)(l+m-4)\alpha_p(5) + \cdots \Big). 
\end{align*}
This gives
\begin{align*}
\Big(-2^{-n}A^{1,0}a_0\Big)_0 &= -2^{m-n} n^m P_0(m) \\  
\Big(-2^{-n}A^{1,0}a_0\Big)_1 &= 2^{1+m-n}n^{1+m} P_1(m) \\
\Big(-2^{-n}A^{1,0}a_0\Big)_2 &= -\Big(2^{2+m-n}n^{2+m} P_2(m) + 2^{m-n} n^m P_0(m) m \alpha_p(3)\Big).
\end{align*}
The polynomials, $P_0, P_1, P_2$ are available in the appendix for explicit functions of $m$. The polynomials,
\begin{align*}
b_0(1+2h,1) &= 1 \\
b_1(1+2h,1) &= -n(1+2h) \\
b_2(1+2h,1) &= \frac{n^2}{2}(1+2h)^2 - n\alpha_p(2)
\end{align*}
provide the $0$-th, $1$-st, and $2$-nd rows of $A^{1,1}$ for $0\le h\le m-1$, and therefore the first three components of $A^{1,1}a'_1$, which are (since $a_{1,m} = 0)$,
\begin{align*}
&\sum^{m}_{h=0}a_{1,h}  \\
-n&\sum^{m}_{h=0}(1+2h) a_{1,h}   \\
 \frac{n^2}{2}&\sum (1+2h)^2a_{1,h} - n\alpha_p(2)\sum^{m}_{h=0}a_{1,h} 
\end{align*}

The three statements of the theorem follow. Higher statistics are readily available, but these three provide a sense of the increasing complexity.
\end{proof}

\begin{theorem}\label{FirstRowEntriesTheorem}

\noindent Part 1. 

\noindent The first three entries of the first row of $A_5 (m=3, n=4)$, of $A_{11}(m=3, n=2)$, and of $A_{23}(m=3, n=1)$ are given by the expressions,
\begin{align*}
a_{1,0} &= 2^{m-n-3}n^{m-2}\Big(2n\alpha_p(2) + 2m\alpha_p(3) + n^2(15 - 16 P_1(m) + 8 P_2(m) \Big) \\
a_{1,1} &= 2^{m-n-3}n^{m-2}\Big(-4n\alpha_p(2) - 4m\alpha_p(3) - 2 n^2(5 - 12 P_1(m) + 8 P_2(m) \Big) \\
a_{1,2} &= 2^{m-n-3}n^{m-2}\Big(2n\alpha_p(2) + 2m\alpha_p(3) + n^2(3 - 8 P_1(m) + 8 P_2(m) \Big) \\
\end{align*}
\noindent Part 2. The first two elements of the second row of the entries of $A_5 (m=3, n=4)$, of $A_{11}(m=3, n=2)$, and of $A_{23}(m=3, n=1)$ are given by the expressions,
\begin{align*}
a_{2,0} = &(1/3)n^m 2^{m-2n-3} \Big((464P_2(m) - 384P_1(m) + 135) n^4 \\
& -4(24P_2(m) - 29)n^3\alpha_p(2) + n^2(-12\alpha^2_p(2) + 4(48P_2(m) + 29m)\alpha_p(3)) \\
& + n(-24(m - 2)\alpha_p(6) - 24\alpha_p(4)) + 48m \alpha^2_p(3)  \Big)\\
&-2^{2m-2n+1} n^{2m+1}P_4(m) -n^{2m-1} 2^{2m-1}P_2(m) (2m-1)\alpha_p(3) \\
       &- n^{2m-2n-3} 2^{2m-3} \Big((2m-3)(2m-4)\frac{\alpha^2_p(3)}{2!}+ (2m-3)\alpha_p(5)  \Big)
\end{align*}

$a_{2,1} = -a_{2,0}$

\noindent Part 3. When $m = 3$,

$$
\left|\begin{array}{ccc}
b_0(3,1) & b_0(5,1) & \Big(A^{1,0}a_0\Big)_0  \\
b_1(3,1) & b_1(5,1) & \Big(A^{1,0}a_0\Big)_1  \\
b_2(3,1) & b_2(5,1) & \Big(A^{1,0}a_0\Big)_2
\end{array}\right| = m (-2n)^3  2^{n} 
$$

\end{theorem}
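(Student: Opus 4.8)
The plan is to read all three parts off the triangular solve of Section~3, specialized to $m=3$ (which is exactly the range $p\in\{5,11,23\}$, since $\frac{p+1}{8}=\frac{3}{n}$ forces $n\in\{4,2,1\}$). A recurring simplification is that the small multiplicative values are prime-independent here: for every odd prime $p\ge 5$ one has $\alpha_p(2)=\frac{\sigma_1(2)}{2}-3=-\frac{3}{2}$ and $\alpha_p(3)=\frac{4}{3}$, because $p\nmid 2,3,4$ annihilates the $p$-terms in $\alpha_p$; and from the appendix $P_1(3)=\frac{3}{2}$, $P_2(3)=\frac{5}{4}$. For Part~1, the vector $a_1'=(a_{1,0},a_{1,1},a_{1,2})$ is the unique solution of the $3\times3$ system $2^nA^{1,1}a_1'=-A^{1,0}a_0$. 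The matrix $A^{1,1}=[b_l(1+2h,1)]_{l,h=0}^{2}$ is already explicit from the rows $b_0(1+2h,1)=1$, $b_1(1+2h,1)=-n(1+2h)$, $b_2(1+2h,1)=\frac{n^2}{2}(1+2h)^2-n\alpha_p(2)$ recorded in the proof of Theorem~\ref{RowSumTheoremOne}, and the three entries of $-A^{1,0}a_0$ are precisely the quantities $\bigl(-2^{-n}A^{1,0}a_0\bigr)_l$ computed there. I would solve by Cramer's rule using $|A^{1,1}|=(-2n)^3$ from Lemma~\ref{Nonsingular11Determinant}, so each $a_{1,j}$ is a $3\times3$ determinant divided by $(-2n)^3$; collecting terms by the monomials $n\alpha_p(2)$, $m\alpha_p(3)$ and the pure-$n$ part expressed through $P_1(m),P_2(m)$ yields the three displayed expressions. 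This step is mechanical, the only care being sign bookkeeping and the grouping into the stated structural form.

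For Part~2, $a_2'=(a_{2,0},a_{2,1})$ solves the $2\times2$ system $2^{2n}A^{2,2}a_2'=-2^nA^{2,1}a_1'-A^{2,0}a_0$, where $|A^{2,2}|=(-2n)$ and $a_1'$ is supplied by Part~1. The work is genuinely heavier: $A^{2,1}=[b_{l+3}(1+2h,1)]$ and $A^{2,0}=[b_{l+6}(2h,0)]$ require the higher polynomials $b_3,b_4,b_6,b_7$, hence the values $\alpha_p(4),\alpha_p(5),\alpha_p(6)$ and the products $\alpha_p^2(2),\alpha_p^2(3)$ visible in the stated $a_{2,0}$. I would compute $A^{2,0}a_0$ by the same binomial orthogonality used in Section~4 (the identity $\sum_h h^N(-1)^{h-1}\binom{m}{h}=0$ for $N<m$), which again forces only odd-index partitions to survive and produces the $P_4(m),\alpha_p(3),\alpha_p(5)$ terms, and I would expand $A^{2,1}a_1'$ directly against the Part~1 vector to get the remaining mixed terms. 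Solving the $2\times2$ system and simplifying gives $a_{2,0}$; the companion relation $a_{2,1}=-a_{2,0}$ is Russell's horizontal symmetry $a_{i,h}=(-1)^{m(i-1)}a_{i,m-i-h}$ at $i=2,\ m=3,\ h=0$, and also serves as a check on the solved system.

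For Part~3 I would avoid expanding the determinant and instead use the defining equation. Write $B_1,B_3,B_5$ for the columns $[b_l(1,1)]$, $[b_l(3,1)]$, $[b_l(5,1)]$ of $A^{1,1}$; the determinant in Part~3 has first two columns $B_3,B_5$ and third column $A^{1,0}a_0$. The row-$1$ equation gives $A^{1,0}a_0=-2^nA^{1,1}a_1'=-2^n\bigl(a_{1,0}B_1+a_{1,1}B_3+a_{1,2}B_5\bigr)$, so by multilinearity the $B_3$ and $B_5$ contributions to the third column vanish and the determinant collapses to $-2^n a_{1,0}\det[B_3,B_5,B_1]$. Since $[B_3,B_5,B_1]$ is a cyclic (even) reordering of $A^{1,1}$, its determinant is $|A^{1,1}|=(-2n)^3$, so the determinant equals $-2^n(-2n)^3 a_{1,0}$. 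Thus Part~3 is equivalent to the single identity $a_{1,0}=-m$, which is exactly the $m=3$ collapse of the Part~1 expression: substituting $\alpha_p(2)=-\frac{3}{2}$, $\alpha_p(3)=\frac{4}{3}$, $P_1(3)=\frac{3}{2}$, $P_2(3)=\frac{5}{4}$ reduces the bracket to $n^2-3n+8$, and $2^{-n}n(n^2-3n+8)=m$ for every $n\in\{1,2,4\}$, so the whole determinant simplifies to $m(-2n)^3 2^n$.

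The main obstacle is Part~2: needing the order-$6$ and order-$7$ polynomials $b_6,b_7$ together with $\alpha_p(4),\alpha_p(5),\alpha_p(6)$ makes the assembly of $A^{2,1}a_1'$ and $A^{2,0}a_0$ long and sign-sensitive, and it is the only place where the binomial collapse does not remove all of the partition bookkeeping. Parts~1 and~3, by contrast, are short once the explicit data from Theorem~\ref{RowSumTheoremOne} and the multilinearity reduction are in hand; the elegant point of Part~3 is that it packages the otherwise complicated first-row entry $a_{1,0}$ into the clean value $-m$.
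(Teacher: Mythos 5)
Your overall strategy coincides with the paper's: Part 1 by Cramer's rule on $2^nA^{1,1}a_1'=-A^{1,0}a_0$ with $|A^{1,1}|=(-2n)^3$ from Lemma~\ref{Nonsingular11Determinant} and the right-hand side taken from the proof of Theorem~\ref{RowSumTheoremOne}; Part 2 by the triangular solve, with the binomial orthogonality of Section~4 collapsing $A^{2,0}a_0$ to odd-index partitions. Your plan to expand $A^{2,1}a_1'$ directly against the Part~1 vector is the same computation as the paper's $A^{2,1}(A^{1,1})^{-1}A^{1,0}a_0$, since $2^na_1'=-(A^{1,1})^{-1}A^{1,0}a_0$. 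One genuine difference: for $a_{2,1}=-a_{2,0}$ the paper does not invoke Russell's horizontal symmetry; it observes that at $m=3$ the first coordinate of $A^{2,1}(A^{1,1})^{-1}A^{1,0}a_0-A^{2,0}a_0$ vanishes, so the explicit inverse $(A^{2,2})^{-1}$ sends the right-hand side to $\frac{1}{2n}(C,-C)$ --- a self-contained derivation, whereas yours imports the symmetry as an external fact (legitimate, since the paper states it, but weaker as a consistency check). Your Part~3 multilinearity collapse --- writing the third column as $-2^n(a_{1,0}B_1+a_{1,1}B_3+a_{1,2}B_5)$, killing the $B_3,B_5$ components, and using the even permutation $\det[B_3,B_5,B_1]=|A^{1,1}|=(-2n)^3$ --- is a tidier packaging of exactly the paper's manipulation, which substitutes the right-hand side into the third column and reads $a_{1,m}=0$ as $0=m\det[\cdot]-2^{-n}\det[\cdot]$ with the same cyclic determinant and the same external input $a_{1,0}=-m$.

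There is, however, a concrete sign inconsistency in your Part~3 that you glossed over. You correctly reduce the claim to $a_{1,0}=-m$, but then ``verify'' this from the Part~1 display by computing that its bracket collapses to $n^2-3n+8$ with $2^{-n}n(n^2-3n+8)=+m$ for $n\in\{1,2,4\}$ --- that is, your own substitution shows the printed formula gives $a_{1,0}=+m$, contradicting the $-m$ you need. The tension is real: the Part~1 expressions as printed are the negatives of the actual entries (their sum is $+2^{m-n}n^m$, while Theorem~\ref{RowSumTheoremOne} gives $\sum_h a_{1,h}=-2^{m-n}n^m$, and the appendix matrices have $(a_{1,0},a_{1,1},a_{1,2})=(-3,-26,-3)$ for $A_5$; solving the $3\times 3$ system numerically at $n=4$ confirms this). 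So a faithful execution of your Cramer step would produce the negatives of the three displayed formulas, and you cannot simultaneously endorse the printed Part~1 and conclude $a_{1,0}=-m$ from it. The clean repair --- and what the paper actually does (``Knowing that $a_{1,0}=-m$'') --- is to take $a_{1,0}=a_{0,1}=-\binom{m}{1}=-m$ from the transpose symmetry, valid for all $m$ and independent of Part~1; with that input your multilinearity argument for Part~3 is complete and correct, and you should flag the overall sign discrepancy in Part~1 rather than reproduce it.
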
 

\noindent Proof of part 1.   We apply Cramer's Rule to, 
\begin{equation*}
A^{1,1} a'_1 =  -2^{-n}A^{1,0} a_0,
\end{equation*}
where
\begin{equation*}
  A^{1,1} = 
\left(\begin{array}{ccc}
1 & 1 & 1  \\
-n & -3n & -5n   \\
\frac{n^2}{2} -n \alpha_p(2) & \frac{9n^2}{2} -n \alpha_p(2) & \frac{25 n^2}{2} -n \alpha_p(2)  
\end{array}\right) 
\end{equation*}
and where
\begin{align*}
-\Big(2^{-n}A^{1,0}a_0\Big)_0 &= -n^m 2^{m-n} \\
-\Big(2^{-n}A^{1,0}a_0\Big)_1 &=  n^{m+1} 2^{m+1-n} P_1(m)\\
-\Big(2^{-n}A^{1,0}a_0\Big)_2 &=  -n^{m+2} 2^{m+2-n}P_2(m) - n^m 2^{m-n} m \ \alpha_p (3), 
\end{align*}
to get the results given above.  

\noindent Proof of Part 2.

\noindent The linear system satisfied by $a_2'$ is      
 
\begin{align*}
2^{2n}A^{2,2} a'_2 &= -2^{n} A^{2,1} a'_1 - A^{2,0} a_0 \\
&= A^{2,1}\Big(A^{1,1}\Big)^{-1} \Big(A^{1,0}a_0\Big)  - A^{2,0} a_0.
\end{align*}
We calculate the components of the vector, $-A^{2,0} a_0$, the $l$-th component being,
\begin{align*}
-\Big(A^{2,0}a_0\Big)_l = \sum_{d=m}^{l+2m} \Big((-1)^{d+m-1}&n^d 2^d P_{d-m}(m) \\
                   &\sum_{|J|=d} \alpha_p(1)^{j_1}\cdots \alpha_p(l+m)^{j_{l+m}}\ \frac{d!}{j_1! j_2! \cdots j_{l+2m}!}\Big).
\end{align*}
How much needs to be calculated for the $i = 2$ row of $m=3$? The matrix, $A^{2,0}$, has dimensions $2\times 4$, so that $l$ runs from $l = 0$ to $l = 1$. For this largest value of $l = 1$ we will have a largest, $d = l + 2m = 1 + 2\times3 = 7$ and a smallest $d= 7 - 2x \ge 3$, so that $x = 2$ and therefore we need partitions of $l + 2m, l + 2m -2, l + 2m -4$. We begin at the top. 
 
Thus, for $d = l+2m$, we have $J = [j_1 = l+2m]$, which contributes one term,
\begin{align*}
(-1)^{l+m-1}n^{l+2m}2^{l+2m}&P_{l+m}(m) \frac{(l+2m)!}{(l+2m)!}\alpha_p(1)^{l+2m} \\
                        &= (-1)^{l+m-1}n^{l+2m}2^{l+2m}P_{l+m}(m),
\end{align*}
as $\alpha_p(1) = 1$. (We use $(-1)^{2m} = 1$). For $d = l+2m-1$, the partition is $J = [j_1 = l+2m-2, j_2 =1]$, and there is no contribution due to $j_2 = 1$ (only odd indices survive.) For $d = l+2m-2$, the partition is $J = [j_1 = l+2m-3, j_3 =1]$, and the contribution is
\begin{align*}
&(-1)^{l+m-1}n^{l+2m-2}2^{l+2m-2} P_{l+m-2}(m) \frac{(l+2m-2)!}{(l+2m-3)!}\alpha_p(1)^{l+2m-3}\alpha_p(3) =\\ &(-1)^{l+m-1}n^{l+2m-2}2^{l+2m-2}P_{l+m-2}(m)(l+2m-2)\alpha_p(3) + \cdots.
\end{align*} 
For $d = l+2m-3$, there is no contribution. For $d = l+2m-4$, the partitions are $J = [j_1 = l+2m-5, j_5 =1]$ and $J = [j_1 = l+2m-6, j_3 =2]$, and the sum of the two terms that contribute is,
\begin{align*}
(-1)^{l+m-1}&n^{l+2m-4}2^{l+2m-4} P_{l+m-4}(m) \\
          &\Big( (l+2m-4)\alpha_p(5) + \frac{(l+2m-4)(l+2m-5)}{2!}\alpha^2_p(3)  \Big).
\end{align*}

The three expressions (for d = l+2m, d = l+ 2m - 2, d = l + 2m -4) are sufficient to give the $l = 0, l = 1$ components of the vector, $-A^{2,0}a_0$, when $m=3$. (Recall the matrix, $A^{2,0}$, is $2\times 4$).

They are, (using $(-1)^{m-1} = (-1)^{3-1} = 1$)
\begin{align*}
  -\Big(A^{2,0}&a_0\Big)_0 \\
  & = \begin{aligned}[t]
&n^{2m} 2^{2m} P_m(m) \\
              &+ n^{2m-2} 2^{2m -2}P_{m-2}(m)\ (2m - 2) \alpha_p(3) \\
              &+ n^{2m-4} 2^{2m -4}P_{m-4}(m) \Big(\frac{(2m - 4)(2m-5)}{2!} \alpha^2_p(3) +  (2m-4) \alpha_p(5)\Big)
    \end{aligned} \\
-\Big(A^{2,0}&a_0\Big)_1 \\
& = 
     \begin{aligned}[t]
       - \Big(&n^{2m+1} 2^{2m +1}P_{m+1}(m)  \\
            &+n^{2m-1} 2^{2m-1}P_{m-1}(m) (2m-1)\alpha_p(3) \\
       &+ n^{2m-3} 2^{2m-3}P_{m-3}(m) \Big((2m-3)(2m-4)\frac{\alpha^2_p(3)}{2!}+ (2m-3)\alpha_p(5)  \Big)  \\
       &+ n^{2m-5} 2^{2m-5}P_{m-5}(m) \Big( (2m-5)\alpha_p(7) \\ 
 &  +(2m-5)(2m-6)\alpha_p(3)\alpha_p(5) \\  
 &+ \frac{(2m-5)(2m-6)(2m-7)}{3!}\alpha^3_p(3) \Big)  \\
       \end{aligned}\\
\end{align*}
We now keep as much of the terms that contribute in the two components of the column vector for $m=3$: 
\begin{align*}
-(A^{2,0}a_0)_0 &= 2^{2m} n^{2m}P_3(m) +  n^{2m-2} 2^{2m -2}P_1(m)\ (2m - 2) \alpha_p(3) \\
-(A^{2,0}a_0)_1 &= -2^{2m+1} n^{2m+1}P_4(m) -n^{2m-1} 2^{2m-1}P_2(m) (2m-1)\alpha_p(3) \\
       &- n^{2m-3} 2^{2m-3} \Big((2m-3)(2m-4)\frac{\alpha^2_p(3)}{2!}+ (2m-3)\alpha_p(5)  \Big).  
\end{align*}
Next, we have already calculated the 3-vector, $A^{1,0}a_0$, which is given above, with components, 
\begin{align*}
A^{1,0}a_0\Big|_0 &= n^m 2^{m} \\
A^{1,0}a_0\Big|_1 &= - n^{m+1} 2^{m+1} P_1(m)\\
A^{1,0}a_0\Big|_2 &=  n^{m+2} 2^{m+2}P_2(m) + n^m 2^{m} m \ \alpha_p (3), 
\end{align*}
and we need the $2\times 3$ matrix, $A^{2,1}\Big(A^{1,1}\Big)^{-1},$ which acts on it. Then calculate that, $A^{2,1}\Big(A^{1,1}\Big)^{-1}$ is

\begin{equation*}
\left(\begin{array}{ccc}
  -\frac{n^2}{2} (5n + 6 \alpha_p(2))&
  \left(\begin{aligned}
    -\frac{23 n^2}{6} &- n \alpha_p(2)\\
    & + \alpha_p(3)
  \end{aligned}\right)& -3n  \\
 \left(\begin{aligned}
   \frac{45n^4}{8} &- \frac{n^2}{2}\alpha_p^2(2) \\
   &+ \alpha_p(2)\frac{29 n^3}{6} \\
   & + 2 n \alpha_p(2)\alpha_p(3) \\
      & -n\alpha_p(4)
 \end{aligned}\right)
& 8n^3 &  
 \left(\begin{aligned}
   \frac{29 n^2}{6} &- n \alpha_p(2)\\
                    & + 2\alpha_p(3)
  \end{aligned}\right)
\end{array}\right). 
\end{equation*}

The coordinates of $A^{2,1}\Big(A^{1,1}\Big)^{-1} A^{1,0}a_0$ are, then,

\begin{align*}
A^{2,1}(A^{1,1})^{-1}&A^{1,0}a_0\Big |_0 \\
   & = \begin{aligned}[t]
      (1/3)n^{m+1}2^{m-1}\Big((- 15 &+ 46 P_1(m)-72 P_2(m)  )n^2 \\
          &+ 6( 2 P_1(m) -3) n \alpha_p(2) \\
          &- 6 (2 P_1(m) + 3m)\alpha_p(3)\Big)
  \end{aligned} \\
A^{2,1}(A^{1,1})^{-1}&A^{1,0}a_0\Big |_1 \\
 & = \begin{aligned}[t]
      (1/3)n^m 2^{m-3} \Big(&(464P_2(m) - 384P_1(m) + 135) n^4 \\
      & -4(24P_2(m) - 29)n^3\alpha_p(2) \\
      & + n^2(-12\alpha^2_p(2) \\
      & + 4(48P_2(m) + 29m)\alpha_p(3)) \\
      & + n(-24(m - 2)\alpha_p(6) - 24\alpha_p(4)) \\
      & + 48m \alpha^2_p(3)\Big). 
  \end{aligned}
\end{align*}
When $m=3$ the first coordinate of the sum,
$$
A^{2,1}(A^{1,1})^{-1}A^{1,0}a_0 - A^{2,0}a_0
$$
is zero. The action of 

$$
(A^{2,2})^{-1}=\left(\begin{array}{cc}
2 & 1/2n  \\
-1 & -1/2n  
\end{array}\right) 
$$
on $A^{2,1}(A^{1,1})^{-1}A^{1,0}a_0 - A^{2,0}a_0$ is therefore,
$$
\frac{1}{2n}(C, -C),
$$  
where $C$ is the second coordinate, 
$$
\Big(A^{2,1}(A^{1,1})^{-1}A^{1,0}a_0 - A^{2,0}a_0\Big) \Big |_1.
$$
Therefore,
\begin{align*}
2^{2n}a_{2,0} &= C \\
2^{2n}a_{2,1} &= -C,
\end{align*}
which is explicitly written in the statement of Part 2 of the Theorem~\ref{FirstRowEntriesTheorem}.

\noindent Note in the formulas that $\alpha_p(5) = \frac{11}{5}$ when $p = 5, n=4, m= 3$ but $\alpha_p(5) = \frac{6}{5}$ when $p = 11, n=2, m= 3$ and $\alpha_p(5) = \frac{6}{5}$ when $p = 23, n=1, m= 3$.

\noindent Proof of Part 3. Knowing that $a_{1,0} = -m$, and knowing the $0$-th column ($l = 0$) of,

$$
\Big [b_l(1+ 2h,1]\Big ]_{l,h =0}^{m-1,m},
$$
we can transform the linear system,
$$
0 = 2^n \Big [b_l(1+ 2h,1]\Big ]_{l,h =0}^{m-1,m}a_1 +  \Big [b_{l+m}(2h,0]\Big ]_{l,h =0}^{m-1,m}a_0,
$$
to
$$
0 = 2^n \Big [b_l(1+ 2h,1]\Big ]_{l,h =0,1}^{m-1,m}a''_1 +  2^n(-m)\Big[b_l(1,1)\Big ]_{l=0}^{m-1} +    \Big [b_{l+m}(2h,0]\Big ]_{l,h =0}^{m-1,m}a_0,
$$
where $a''_1$ is the column vector, $\Big[a_{1,h}\Big ]_{h=1}^{m}$. Then substitute the vector sum,
$$
m \Big[b_l(1,1)\Big ]_{l=0}^{m-1} - 2^{-n} \Big [b_{l+m}(2h,0]\Big ]_{l,h =0}^{m-1,m}a_0 
$$
into the third (and final) column of the square matrix,
$$
\Big [b_l(1+ 2h,1]\Big ]_{l,h =0,1}^{m-1,m},
$$
to obtain, when $m = 3$,
$$
a_{1,m} = 0 = m\left|\begin{array}{ccc}
b_0(3,1)& b_0(5,1) & b_0(1,1)  \\
b_1(3,1)& b_1(5,1) & b_1(1,1)  \\
b_2(3,1)& b_2(5,1) & b_2(1,1)  
\end{array}\right| - 2^{-n} \left|\begin{array}{ccc}
b_0(3,1) & b_0(5,1) & \Big(A^{1,0}a_0\Big)_0  \\
b_1(3,1) & b_1(5,1) & \Big(A^{1,0}a_0\Big)_1  \\
b_2(3,1) & b_2(5,1) & \Big(A^{1,0}a_0\Big)_2
\end{array}\right|
$$
From Lemma~\ref{XYqExpansionLemma}, 
$$
\left|\begin{array}{ccc}
b_0(3,1)& b_0(5,1) & b_0(1,1)  \\
b_1(3,1)& b_1(5,1) & b_1(1,1)  \\
b_2(3,1)& b_2(5,1) & b_2(1,1)  
\end{array}\right| = (-2n)^3, 
$$
and we get Part 3 of the theorem. We note that if we had established the identity of Part 3 directly we would have proved that $a_{1,m} = 0$ in the case $m = 3$.

\end{proof}

The expressions illustrate the complexity of the entries of  $A_5$, $A_{11}$, $A_{23}$ before the parameters are  evaluated numerically. These are in the appendix for comparison. Part 3 of Theorem~\ref{FirstRowEntriesTheorem} indicates in the simplest instance that the relations, $a_{l,h} = 0$ for $l + h > m$, are non trivial.

\appendix
\section{}

A Table of $b_N(u,v)$

\begin{align*}
b_0(u,v) &= 1 \\
b_1(u,v) &= -n u\\
b_2(u, v) &= n^2 u^2\frac{1}{2!} - nv \ \alpha_p(2) \\ 
b_3(u,v) &= -n^3 u^3\frac{1}{3!} + n^2 u v \ \alpha_p(2) - n u  \ \alpha_p(3) \\
b_4(u,v) &= n^4 u^4\frac{1}{4!} -n^3 u^2v\frac{1}{2!} \ \alpha_p(2) + n^2 u v \ \alpha_p(3) + n^2 v^2\frac{\alpha^2_p(2)}{2!} - n v  \ \alpha_p(4) \\
b_5(u,v) &= -n^5 u^5\frac{1}{5!} + n^4 u^3 v \frac{1}{3!} \alpha_p(2) - n^3 u^3 \frac{1}{2!} \alpha_p(3)- n^3 u v^2  \frac{\alpha^2_p(2)}{2!}\\
&+ n^2 u v \ \alpha_p(4) + n^2 u v \ \alpha_p(3)\alpha_p(2)  - n u  \ \alpha_p(5)  \\
b^{*}_6(u,v) &= n^6 u^6\frac{1}{6!} + n^4 u^4 \frac{\alpha_p(3)}{3!} + n^2 u^2 \frac{\alpha^2_p(3)}{2!} + n^2 u^2 \alpha_p(5)\\
b^{*}_7(u,v) &= -n^7 u^7\frac{1}{7!} - n^5 u^5 \frac{\alpha_p(3)}{4!} - n^3 u^3 \frac{\alpha_p(5)}{2!} - n^3 u^3 \frac{\alpha^2_p(3)}{2!} - n\alpha_p(7)
\end{align*}

\section {}

\begin{equation*}
 A_5 = 
\left(\begin{array}{cccc}
1 & -3 & 3 & -1 \\
-3 & -26 & -3 & 0  \\
3 & -3 & 0 & 0 \\
-1 & 0 & 0 & 0
\end{array}\right) \qquad  n=4 \ m=3
\end{equation*}

\begin{equation*} A_{11} = 
\left(\begin{array}{cccc}
1 & -3 & 3 & -1 \\
-3 & -10 & -3 & 0  \\
3 & -3 & 0 & 0 \\
-1 & 0 & 0 & 0
\end{array}\right) \qquad n=2 \ m=3
\end{equation*}

\begin{equation*}
  A_{23} = 
\left(\begin{array}{cccc}
1 & -3 & 3 & -1 \\
-3 & 2 & -3 & 0  \\
3 & -3 & 0 & 0 \\
-1 & 0 & 0 & 0
\end{array}\right) \qquad n=1 \ m=3
\end{equation*}

\begin{equation*}
A_{31} = 
\left(\begin{array}{ccccc}
1 & -4 & 6 & -4 & 1 \\
-4 & 0 & 0 & -4 & 0  \\
6 & 0 & 6 & 0 & 0 \\
 -4 & -4 & 0 & 0 & 0 \\
1 & 0 & 0 & 0 & 0
\end{array}\right) \qquad n = 1 \ m = 4
\end{equation*}

\begin{equation*}
 A_{19} = 
\left(\begin{array}{cccccc}
1 & -5 & 10 & -10 & 5 & -1 \\
-5 & -92 & -62 & -92 & -5 & 0  \\
10 & -62 & 62 & -10 & 0 & 0 \\
-10 & -92 & -10 & 0 & 0 & 0 \\
5 & -5 & 0 & 0 & 0 & 0 \\
-1 & 0& 0 & 0 & 0 & 0, 
\end{array}\right) \qquad n = 2 \ m = 5
\end{equation*}

\begin{equation*}
A_{47} = 
\left(\begin{array}{ccccccc}
1 & -6 & 15 & -20 & 15 & -6 & 1\\
-6 & -10 & 0 & 0 & -10 & -6 & 0\\
15 & 0 & -14 & 0 & 15 & 0 & 0\\
-20 & 0 & 0 & -20 & 0 & 0 & 0\\
15 &-10 & 15 & 0 & 0 & 0 & 0\\
-6 & -6 & 0 & 0 & 0 & 0 &  0\\
 1 & 0 & 0 & 0 & 0 & 0 & 0 \\
\end{array}\right) \qquad n = 1 \ m = 6
\end{equation*}

\begin{align*}
A_{13} &= 
\left(\begin{array}{cccccccc}
1 & -7 & 21 & -35 & 35 & -21 &7 & -1 \\
-7 & -3318 & -31721 & -60980 & -31721 & -3318 & -7 & 0 \\
21 & -31721 & -11438 & 11438 & 31721 & -21 & 0 & 0 \\
-35 & -60980 & 11438 & -60980 & -35 & 0 & 0 & 0 \\
35 & -31721 & 31721 & -35 & 0 & 0 & 0 & 0 \\
-21 & -3318 & -21 & 0 & 0 & 0 &  0 & 0 \\
7 & -7 & 0 & 0 & 0 & 0 & 0 & 0 \\
-1 & 0 & 0 & 0 & 0 & 0 & 0 & 0 
\end{array}\right)\\
 & n = 4,  m = 7
\end{align*}

\section{}

\begin{equation*}
P_0(m) = 1
\end{equation*}

\begin{equation*}
P_1(m) = \frac{m}{2}
\end{equation*}

\begin{equation*}
P_2(m) = \frac{m(3m+1)}{24}
\end{equation*}

\begin{equation*}
P_3(m) = \frac{m^2(m+1)}{48}
\end{equation*}

\begin{equation*}
P_4(m) = \frac{m(-2+ 5m + 30m^2 + 15 m^3)}{8 (6!)}
\end{equation*}

The polynomials may be written,
\begin{equation*}
P_{N-m}(m) = \sum_{r=0}^{N-m} c_{N-m,r} \binom{m}{r}
\end{equation*}

\begin{align*}
c_{0,0} &= 1 \\
c_{N-m, r} &= \frac{r}{N-m + r}\Big( c_{N-m -1, r-1} + c_{N-m -1, r} \Big) \qquad 0\le r \le N-m \\
c_{N-m-1, N-m} &= 0  
\end{align*}

\section{}

\begin{equation*}
\alpha_p(1)  =  1
\end{equation*}

\begin{equation*}
\alpha_p(2)  =  -\frac{3}{2} \qquad p \neq 2
\end{equation*}

\begin{equation*}
\alpha_p(3)  =  \frac{4}{3} \qquad p \neq 2, p\neq 3
\end{equation*}

\begin{equation*}
\alpha_p(4)  =  -\frac{3}{4} \qquad p \neq 2
\end{equation*}

\begin{equation*}
\alpha_p(5)  =  \frac{6}{5} \qquad p \neq 2, p\neq 5 \qquad (= \frac{11}{5} \qquad p = 5)
\end{equation*}

\section{Differential equation for $\lambda$}

The modular lambda function satisfies an elegant non-linear
differential equation, 
\begin{equation*}
        2\dfrac{f'''(\tau)}{f'(\tau)^3}
             -3\dfrac{f''(\tau)^2}{f'(\tau)^4}
             =-\dfrac{f(\tau)^2-f(\tau)+1}{f(\tau)^2(1-f(\tau))^2}.
\end{equation*}
An interesting fact about this differential equation is that it has a
variety of solutions that are algebraically related to the lambda
function:
\begin{align*}
  f(\tau) &= \lambda(p\tau) \\
  f(\tau) &= \lambda\left(\dfrac{\tau+2k}{p}\right) & 0\le k < p
\end{align*}

The differential equation for $f = \lambda$ is very easily verified.  It
follows from the fact that the differential operator
\begin{equation*}
        f(\tau)\rightarrow
          2\dfrac{f'''(\tau)}{f'(\tau)^3}
             -3\dfrac{f''(\tau)^2}{f'(\tau)^4}
\end{equation*}
is invariant under linear fractional transformations.  More precisely,
if $L_0$ is a fractional linear transformation,
\begin{equation*}
  L_0(\tau) = \dfrac{a\tau+b}{c\tau+d}
\end{equation*}
where
\begin{equation*}
  ad - bc > 0
\end{equation*}
and
\begin{equation*}
  f_1(\tau) = f_0(L_0(\tau))
\end{equation*}
then
\begin{equation*}
        2\dfrac{f_1'''(\tau)}{f_1'(\tau)^3}
             -3\dfrac{f_1''(\tau)^2}{f_1'(\tau)^4}
        =2\dfrac{f_0'''(L_0(\tau))}{f_0'(L_0(\tau))^3}
             -3\dfrac{f_0''(L_0(\tau))^2}{f_0'(L_0(\tau))^4}.
\end{equation*}
This assertion may be quickly verified through direct computation.  It
follows from this that if
\begin{equation*}
  f(\tau) = \lambda(\tau).
\end{equation*}
then 
\begin{equation*}
        2\dfrac{f'''(\tau)}{f'(\tau)^3}
             -3\dfrac{f''(\tau)^2}{f'(\tau)^4}
\end{equation*}
is a weight zero modular form with singularities at the cusps.  This
weight zero modular form must be a rational function of
$\lambda(\tau)$ and a direct computation shows that this rational
function is
\begin{equation*}
     -\dfrac{\lambda(\tau)^2-\lambda(\tau)+1}{\lambda(\tau)^2(1-\lambda(\tau))^2}.
\end{equation*}
This differential equation differs from the equation on the Wolfram function site.

\bibliography{Modular}{}

\begin{thebibliography}{1}

\bibitem{Berndt2007}
Bruce~C. Berndt.
\newblock Partition-theoretic interpretations of certain modular equations of
  schröter, russell, and ramanujan.
\newblock {\em Annals of Combinatorics}, 11(2):115--125, 2007.

\bibitem{1987-borwein}
Jonathan~M. Borwein and Peter~B. Borwein.
\newblock {\em {Pi and the AGM}}.
\newblock Wiley, New York, 1987.

\bibitem{ChanLiaw2000}
Huat~Chan Chan and Wen-Chin Liaw.
\newblock On russell-type modular equations.
\newblock {\em Canadian Journal of Math}, 52(1):31--46, 2000.

\bibitem{Hanna1928}
M.~Hanna.
\newblock The modular equations.
\newblock {\em Proceedings of the London Mathematical Society},
  s2-28(1):46--52, 1928.

\bibitem{Russell1887}
R.~Russel.
\newblock On $\kappa\lambda-\kappa'\lambda'$ modular equations.
\newblock {\em Proceedings of the London Mathematical Society},
  s1-19(1):90--111, 1887.

\bibitem{Russell1890}
R.~Russel.
\newblock On modular equations.
\newblock {\em Proceedings of the London Mathematical Society},
  s1-22(1):351--395, 1890.

\bibitem{Simons1952}
William Simons.
\newblock The fourier coefficieints of the modular function $\lambda(\tau)$.
\newblock {\em Canadian Journal of Mathematics}, 4:67--80, 1952.

\end{thebibliography}

\end{document}